\newtheorem{thm}{Theorem}
\title{Asymptotics of Some Feynman-Kac  Functionals}
\newif\ifuniqueAffiliation
\author{ Charles Hagwood \\
	National Institute of Standards and Technology (Retired)\\
	100 Bureau Drive Stop 881\\
	Gaithersgurg, MD 20899 \\
	\texttt{rchagw@gmail.com} }
	\date{} 
\begin{document}
\maketitle

\begin{abstract}
Methods were  initiated by  Mark Kac and Richard Feynman to evaluate 
 random functionals of the form  $\int^t_0V(X_s)ds$ for  a nonnegative $V$ and  a Markov process $X_t$. Their results evolved into the well known Feynman Kac formula.   Functionals of this type appear in both theoretical and applied applications in partial differential equations,  quantum physics, mathematical finance, control theory, etc. Here the time average of one such  functional associated with  the Feynman Kac formula  is studied. In real time applications where only the path is observed, the  time average often is a better predictor than the functional  at its last observation point..  It represents quantities, e.g., the long term average cost or wealth, the long term average velocity.  As a statistic, it is of interest to determine if it has an asymptotic limit and to determine that limit.  An expression is derived for its asymptotic  time average.
\end{abstract}

\keywords{Feynman-Kac  \and stochastic differential equations  \and  time average  \and random integrals \and control theory}
\noindent\textbf{MSC2020.} Primary 60H10,58J65,60F17, Secondary 91G30,60H30

\section{Introduction}
For $X_t, t\geq 0$ a Markov process on a probability space  $(\Omega,{\cal F},P)$ with state space $I\subset \mathbb{R}$, consider  random integrals of the form
\begin{equation}\label{exponentialIngegral}
e_K(t)=\int^T_tq(X_s)e^{-\int^s_tK(X_\tau)d\tau}ds
\end{equation}
for nonzero positive real valued functions  $K$  and $q$.   These functionals appear prominently in the Feynman-Kac formula.   Kac called them Wiener functionals \cite{kac1949distributions} and he established a relationship between them and the Schrodinger equation \cite{kesten1986influence}.  This was similarly done by Feynman using path integrals \cite{feynman1948space}.   Their joint  work  is now  commonly referred to as the Feynman-Kac formula. Treatments of the Feynman-Kac formula can be found in the textbooks \cite{oksendal2013stochastic}, \cite{karatzas2012brownian}, \cite{bhattacharya2009stochastic}, \cite{rogers2000diffusions}, \cite{freidlin1985functional}. These functionals  are extensively investigated in a more general form $e_V(T)=\int^T_0V(X_t)dt$  for  functionals $V$ and $T$ fixed or random.  Moments, large deviations and connections to Schrodinger's equation  of $e_V(t)$ can be found in \cite{fitzsimmons1999kac}, \cite{darling1957occupation},\cite{cameron1945evaluation},  \cite{kac1951some}, \cite{takeda2011large}, \cite{ioannis1980stochastic}, \cite{takeda2008large}, \cite{chung1980stopped}  and \cite{musiela1986kac}.

The contribution of this paper is an asymptotic  expression  for $\bar{e}_T$ the time average  process of $e_K(t)$.  
 Subject to some restrictions, when $X_t$ has an invariant measure $m(x), x\in I$ it is shown 
\begin{equation}\label{result}
\bar{e}_T=\frac{1}{T}\int^T_0e_K(t)dt \rightarrow \int_I \frac{q(x)}{K(x)}m(x)dx\quad (a.e.).
\end{equation}
$0<T\rightarrow \infty$.   The intergrand $q(x)/K(x)$ in \eqref{result} acts like an action/reaction ratio.   For example, in the  parabolic partial differential equation associated with the Feynman-Kac formula $K(x)$ represents a  potential (barrier) as in the Schrodinger equation and $q(x)$ is associated with a source(sink) or external forcing term acting on the particle.  

 The  functional $e_K(t)$  occurs naturally in stochastic optimal control problems such as the model
\begin{align}
dX_t &=(\mu(t,u_t)X_t +q(t,u_t))dt +\sigma(t,u_t)dB_t \quad 0<t<T\label{BlackSholes}\\
X_T&=x
\end{align}
for $B_t$  a Brownian motion and $u_t$  a control satisfying a  stochastic differential equation 
\begin{align}\label{controlVar}
du_t &=a(t,u_t)dt+b(t,u_t)d\tilde{B}_t\quad 0<t<T\\
u_T&=v
\end{align}
where $\tilde{B}_t$ represents  a second Brownian motion, see  \cite{borkar2005controlled},  \cite{stoikov2005dynamic}.  Its solution is 
\begin{equation}
X_t=  xe^{\int^T_t\mu(\tau,u_\tau)d\tau}-\int^T_tq(s,u_s)e^{\int^s_t\mu(\tau,u_\tau)d\tau}ds -\int^T_t\sigma(s,u_s)e^{\int^s_t\mu(\tau,u_\tau)d\tau}dB_s.
\end{equation}
In  mathematical finance the consumption investment problem is of this form, see \cite{merton1971optimization}, \cite{karatzas1989optimization}.    Another application involves nanorod measurement technology  \cite{mulholland2021effect}.  This example involves measuring a nanorod or a fiber using a procedure analogous to the Millikan oil drop experiment \cite{millikan1910isolation}  that  determined the  elementary electrical  charge $e$  of the electron. Examples of the procedure can be found in \cite{larriba2021size},\cite{mulholland2021effect},\cite{li2012effect} \cite{hagwood2019limiting}.

\section{Results}
The proof of \eqref{result} relies on adapting the  Cauchy Mean Value Theorem  \cite{bartle1964elements} and its proof to this problem. We make it accessible in order to easily follow the proof of Theorem 1.    It says, for $f(x)$ and $g(x)$ two functions continuous on $[a,b]$ and differentiable on $(a,b)$,  there exists a constant  $a<c<b$  such that  for $g(a)\ne g(b)$
\begin{equation}
\frac{f(b)-f(a)}{g(b)-g(a)} =\frac{f'(c)}{g'(c)}.
\end{equation}
 The proof is  simple.  Let
\begin{equation}\label{CMVFunction}
h(x)= f(x)-\frac{f(b)-f(a)}{g(b)-g(a)}g(x).
\end{equation}
 Since $h(b)=h(a)$,  by Rolle's Theorem there exists $a<c<b$  such that $h'(c)=0$, i.e.
\begin{equation}\label{h_fcn_CMV}
h'(c)=f'(c)-\frac{f(b)-f(a)}{g(b)-g(a)}g'(c)=0
\end{equation}

\begin{thm}\label{Thm1} Let  $X_t, t\geq 0$ be a process with continuous sample paths. The following result holds  for nonzero  continuous functions $K(x)$ and  $q(x), x\in \mathbb{R}$ and for $0\leq t\leq T$
\begin{equation}\label{thm1i}
 \int^T_tq(X_s)e^{-\int^s_tK(X_\tau)d\tau}ds=\frac{q(X_{\xi_{tT}})}{K(X_{\xi_{tT}})}(1-e^{-\int^T_tK(X_\tau)d\tau})
\end{equation}
where
\begin{equation}\label{changeOfTimeVar}
\xi_{tT}=\inf\{s: t<s<T, \frac{K(X_s)}{q(X_s)}=r_T(t)\}
\end{equation}
and where $r_T(t)$  is given by
\begin{equation}\label{bdyCurve}
r_T(t)=\frac{e^{-\int^t_0K(X_\tau)d\tau}-e^{-\int^T_0K(X_\tau)d\tau}}{\int^T_tq(X_s)e^{-\int^s_0K(X_\tau)d\tau}}=\frac{1-e^{-\int^T_tK(X_\tau)d\tau}}{\int^T_tq(X_s)e^{-\int^s_tK(X_\tau)d\tau}}.
\end{equation}

\end{thm}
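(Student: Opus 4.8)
The plan is to reduce the identity \eqref{thm1i} to the Cauchy Mean Value Theorem applied to two carefully chosen functions of the lower endpoint, evaluated over the interval $[t,T]$. Fix $t$ and $T$ with $0\le t\le T$, and for $s\in[t,T]$ define
\begin{equation}
f(s)=e^{-\int^s_0K(X_\tau)d\tau},\qquad
g(s)=-\int^s_tq(X_u)e^{-\int^u_0K(X_\tau)d\tau}du,
\end{equation}
so that $f$ and $g$ are continuous on $[t,T]$ and differentiable on $(t,T)$ (here the continuity of the sample paths of $X$ together with the continuity of $K$ and $q$ is what makes the integrands continuous in the variable of integration, hence the integrals $C^1$ in their upper limit). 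A direct computation gives $f'(s)=-K(X_s)e^{-\int^s_0K}$ and $g'(s)=-q(X_s)e^{-\int^s_0K}$, so the ratio $f'(s)/g'(s)=K(X_s)/q(X_s)$ whenever $q(X_s)\ne0$, which holds since $q$ is assumed nonzero. The difference quotient is $\bigl(f(T)-f(t)\bigr)/\bigl(g(T)-g(t)\bigr)$, and since $g(t)=0$ and $f(t)=e^{-\int^t_0K}$, after multiplying numerator and denominator by $e^{\int^t_0K}$ this equals exactly $r_T(t)$ as written in \eqref{bdyCurve}; the second equality in \eqref{bdyCurve} is just the same manipulation of the $e^{-\int_0^\bullet K}$ factors.

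Next I would invoke the Cauchy Mean Value Theorem exactly in the form recalled before the theorem statement (built from $h(s)=f(s)-r_T(t)\,g(s)$ and Rolle's theorem) to produce a point $c\in(t,T)$ with $f'(c)/g'(c)=r_T(t)$, i.e. $K(X_c)/q(X_c)=r_T(t)$. The set of such $c$ is nonempty and, because $s\mapsto K(X_s)/q(X_s)$ is continuous, closed in $(t,T)$; taking its infimum gives $\xi_{tT}$ as defined in \eqref{changeOfTimeVar}, and by continuity the infimum is itself attained, so $K(X_{\xi_{tT}})/q(X_{\xi_{tT}})=r_T(t)$ as well. Finally, rearranging this last equality yields
\begin{equation}
\frac{q(X_{\xi_{tT}})}{K(X_{\xi_{tT}})}=\frac{1}{r_T(t)}
=\frac{\int^T_tq(X_s)e^{-\int^s_tK(X_\tau)d\tau}ds}{1-e^{-\int^T_tK(X_\tau)d\tau}},
\end{equation}
which is precisely \eqref{thm1i} after clearing the denominator. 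A small amount of care is needed at the boundary case $g(T)=g(t)$ (equivalently $\int_t^Tq(X_s)e^{-\int_0^sK}\,ds=0$), but since $q$ is of one sign this integral vanishes only when $t=T$, where \eqref{thm1i} holds trivially with both sides zero, so the Cauchy Mean Value Theorem applies on the whole nondegenerate range.

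The main obstacle I anticipate is not the mean-value step itself but justifying that $f$ and $g$ are genuinely differentiable on $(t,T)$ with the claimed derivatives: this is where one must use that $t\mapsto X_t$ has continuous paths and $K,q$ are continuous, so the Fundamental Theorem of Calculus applies pathwise to integrals like $\int_t^s q(X_u)e^{-\int_0^uK}\,du$. A secondary point worth stating cleanly is that $\xi_{tT}$ is well-defined as an infimum of a nonempty closed set and that the defining equation persists at the infimum; everything after that is the bookkeeping of exponentials that turns $1/r_T(t)$ into the right-hand side of \eqref{thm1i}. No probabilistic input beyond path continuity is used — the identity is entirely pathwise.
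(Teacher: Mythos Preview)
Your proof is correct and is essentially the same argument as the paper's: both apply the Cauchy Mean Value Theorem on $[t,T]$ to the two functions $s\mapsto e^{-\int_0^s K(X_\tau)\,d\tau}$ and $s\mapsto \int_t^s q(X_u)e^{-\int_0^u K(X_\tau)\,d\tau}\,du$, read off $f'/g'=K(X_\cdot)/q(X_\cdot)$, and then take the infimum of the solution set to pin down $\xi_{tT}$. The only cosmetic differences are that you swap the labels $f$ and $g$ relative to the paper and add brief remarks on pathwise differentiability and the degenerate case $t=T$.
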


\begin{proof}:
\noindent One may write
\begin{equation}
\int^T_tq(X_s)e^{-\int^s_tK(X_\tau)d\tau}ds=\frac{\int^T_tq(X_s)e^{-\int^s_0K(X_\tau)d\tau}ds}{e^{-\int^t_0K(X_\tau)d\tau}}=\frac{f(t)}{g(t)}
\end{equation}
where the functions $f,g$ are given by
\begin{equation}
f(t) =\int^T_tq(X_s)e^{-\int^s_0K(X_\tau)d\tau}ds\qquad g(t) =e^{-\int^t_0K(X_\tau)d\tau}.
\end{equation}
For these two  functions, applying the Cauchy Mean Value Theorem,  there exists $\xi_{tT}$ such that
\begin{equation}
\frac{f(T)-f(t)}{g(T)-g(t)}=\frac{f'(\xi_{tT})}{g'(\xi_{tT})} \quad t <\xi_{tT}< T
\end{equation}
or equivalently, since $f(T)=0$
\begin{equation}
\frac{f(t)}{g(t)}=\frac{f'(\xi_{tT})}{g'(\xi_{tT})}(1-\frac{g(T)}{g(t)})
=\frac{q(X_{\xi_{tT}})}{K(X_{\xi_{tT}})}(1-\frac{g(T)}{g(t)})
\end{equation}
which is the postulated result in \eqref{thm1i}.

The time change $\xi_{tT}$ may be determined by mimicking the proof of the Cauchy Mean Value Theorem. There may be more than one solution to \eqref{h_fcn_CMV}. The   smallest is taken to define  $\xi_{tT}$ 
\begin{equation}
\xi_{tT} =\inf\{s: t<s<T, h'(s)=0\}
\end{equation}
where
\begin{equation}
h(x)= f(x)-\frac{f(T)-f(t)}{g(T)-g(t)}g(x)=f(x)+\frac{f(t)}{g(T)-g(t)}g(x).
\end{equation}
 The derivative of  $h(x)$ is
\begin{equation}
h'(x) =f'(x)+\frac{f(t)}{g(T)-g(t)}g'(x)=-q(X_x)e^{-\int^x_0K(X_\tau) d\tau}-\frac{f(t)}{g(T)-g(t)}K(X_x)e^{-\int^x_0K(X_\tau) d\tau}
\end{equation}
The equality $h'(x)=0$ reduces to
\begin{equation}
-q(X_x)-\frac{f(t)}{g(T)-g(t)}K(X_x)=0
\end{equation}
or
\begin{equation}
\frac{ K(X_x)}{q(X_x)}=\frac{g(t)-g(T)}{f(t)}
\end{equation}
Thus
\begin{align}\label{chTime1}
\xi_{tT}&=\inf\{x: t<x<T,\frac{ K(X_x)}{q(X_x)}=\frac{g(t)-g(T)}{f(t)}\}
\end{align}
as was postulated in \eqref{bdyCurve}.
\end{proof}

\begin{thm} \label{Thm2} Let $X_t$ be a process having continuous sample paths and in addition assume  $X_t$ has an invariant measure $m(x), x\in I\subset \mathbb{R}$, $|q(x)|\leq L$, $K(x)$ is a positive continuous lower bounded  function with $K(x) >M,$ for  $x\in \mathbb{R}$, $M$ a positive constant and the derivative of $q(x)/K(x)$ is bounded, then 
\begin{equation}\label{limitThm}
\lim_{T\rightarrow \infty}\frac{1}{T}\int^T_0\int^T_tq(X_s)e^{-\int^s_tK(X_\tau)d\tau}dsdt=\int_I\frac{q(x)}{K(x)}m(dx).
\end{equation} 
\end{thm}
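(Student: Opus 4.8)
The plan is to reduce the double integral to Theorem~\ref{Thm1} together with an ergodic theorem for $X$. By \eqref{thm1i} the quantity in \eqref{limitThm} equals
\[
\frac1T\int_0^T\frac{q(X_{\xi_{tT}})}{K(X_{\xi_{tT}})}\Bigl(1-e^{-\int_t^TK(X_\tau)d\tau}\Bigr)\,dt .
\]
First I would peel off the exponential term: since $K>M$ forces $e^{-\int_t^TK(X_\tau)d\tau}\le e^{-M(T-t)}$, while $|q|\le L$ and $K>M$ give $|q/K|\le L/M$, that term contributes at most $\frac{L}{MT}\int_0^Te^{-M(T-t)}dt=\frac{L(1-e^{-MT})}{M^2T}\to0$. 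So it suffices to show $\frac1T\int_0^T\phi(X_{\xi_{tT}})\,dt\to\int_I\phi\,m(dx)$, where $\phi:=q/K$.

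Second, I would invoke ergodicity of $X$ with respect to $m$ (surely one of the intended ``restrictions''), which gives $\frac1T\int_0^T\phi(X_t)\,dt\to\int_I\phi\,m(dx)$ a.e.; it then remains to prove $\frac1T\int_0^T|\phi(X_{\xi_{tT}})-\phi(X_t)|\,dt\to0$. This is where boundedness of $\phi'$ enters, via $|\phi(X_{\xi_{tT}})-\phi(X_t)|\le C|X_{\xi_{tT}}-X_t|$ with $C=\sup|\phi'|$. A convenient way to see the structure: combining \eqref{thm1i} with the identity $\int_t^TK(X_s)e^{-\int_t^sK(X_\tau)d\tau}ds=1-e^{-\int_t^TK(X_\tau)d\tau}$ shows that $\phi(X_{\xi_{tT}})$ is the weighted average $\int_t^T\phi(X_s)\,w_t(ds)$ against the probability measure $w_t(ds)=\bigl(1-e^{-\int_t^TK(X_\tau)d\tau}\bigr)^{-1}K(X_s)e^{-\int_t^sK(X_\tau)d\tau}\,ds$ on $[t,T]$, which (again because $K>M$) assigns mass at most $e^{-M\delta}/(1-e^{-M(T-t)})$ to $[t+\delta,T]$. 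Hence $\phi(X_{\xi_{tT}})-\phi(X_t)=\int_t^T\bigl(\phi(X_s)-\phi(X_t)\bigr)w_t(ds)$ is dominated by the oscillation of $s\mapsto\phi(X_s)$ in a short right-neighbourhood of $t$, and one is left to estimate $\frac{C}{T}\int_0^T\int_t^T|X_s-X_t|\,w_t(ds)\,dt$ (the final unit time-interval, where $1-e^{-\int_t^TK\,d\tau}$ may be small, is handled trivially through $|\phi|\le L/M$ at cost $O(1/T)$).

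I expect this last estimate to be the main obstacle, and a genuinely delicate one. The measure $w_t$ concentrates only on the \emph{fixed} length scale $\sim1/M$, not on a vanishing scale, so for fixed $\delta$ the best one gets from path continuity plus the ergodic theorem is $\frac1T\int_0^T\sup_{0\le u\le\delta}|X_{t+u}-X_t|\,dt\to\mathbb{E}_m\bigl[\sup_{0\le u\le\delta}|X_u-X_0|\bigr]$, a quantity that does not tend to $0$, while sending $\delta\to0$ is blocked by the $e^{-M\delta}$ tail. In other words, the local averaging built into $\xi_{tT}$ does not obviously cancel against the outer time average. A natural attempt to extract the missing cancellation is to use the Feynman--Kac/Dynkin identity $\frac{d}{dt}e_K(t)=K(X_t)e_K(t)-q(X_t)$ (equivalently, after Fubini, $\bar{e}_T=\frac1T\int_0^Tq(X_s)H(s)\,ds$ with $H'=1-K(X_s)H$, $H(0)=0$) together with stationarity; but this line suggests the limiting time average is really governed by the resolvent equation $(K-\mathcal{L})g=q$ for the generator $\mathcal{L}$ of $X$, which collapses to $\int_I(q/K)\,m(dx)$ exactly when $K$ is constant. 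So before grinding out estimates I would want to confirm that the stated hypotheses alone secure the clean formula \eqref{limitThm}, or else pin down the additional assumption that does.
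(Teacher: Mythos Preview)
Your strategy coincides with the paper's: apply Theorem~\ref{Thm1}, discard the factor $e^{-\int_t^T K}$ using $K>M$ and $|q|\le L$, invoke the ergodic theorem for $\frac1T\int_0^T\phi(X_t)\,dt$ with $\phi=q/K$, and then argue that replacing $X_t$ by $X_{\xi_{tT}}$ inside the time average is harmless via the bound on $\phi'$. The only structural difference is that the paper first splits the outer integral at $T\epsilon$; this is cosmetic.

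The step you isolate as ``the main obstacle'' is exactly where the paper's argument goes soft. The paper writes $\Delta_{tT}=\phi(X_{\xi_{tT}})-\phi(X_t)=F(\tilde X_t)(X_{\xi_{tT}}-X_t)$, notes via L'H\^opital that $\xi_{tT}\to T$ as $t\to T$, and from this alone infers that $\Delta_{tT}\to 0$ for $t\in[T\epsilon,T]$ as $T\to\infty$ and hence that $\frac1T\int_{T\epsilon}^T\Delta_{tT}\,dt\to 0$. No control of $\xi_{tT}-t$ away from the right endpoint is offered, and your weighted-average identity $\phi(X_{\xi_{tT}})=\int_t^T\phi(X_s)\,w_t(ds)$ makes transparent why none is automatic: $w_t$ lives on the fixed scale $1/M$, so $\Delta_{tT}$ need not be small for generic $t$ in $[T\epsilon,T]$. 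That identity, and the resolvent heuristic you sketch, are genuine sharpenings the paper does not contain. In short, you have reconstructed the paper's scheme and pinpointed its delicate step more precisely than the paper does; the paper supplies no further mechanism beyond what you outline to close it, so your hesitation there is well founded rather than a defect of your proposal.
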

\begin{proof} Fix $0<\epsilon <1$ and partition the  integral $I_T$ on the lefthand side of \eqref{limitThm} as
\begin{equation}
I_T=\frac{1}{T}\int^T_{T\epsilon}\int^T_tq(X_s)e^{-\int^s_tK(X_\tau)d\tau}dsdt +\frac{1}{T} \int^{T\epsilon}_0\int^{T}_tq(X_s)e^{-\int^s_tK(X_\tau)d\tau}dsdt =I_1^\epsilon+I_2^\epsilon.
\end{equation}
Applying the assumptions
\begin{align}
|I_2^\epsilon|&=|\frac{1}{T}\int^{T\epsilon}_0\int^{T}_tq(X_s)e^{-\int^s_tK(X_\tau)d\tau}dsdt|<\frac{L}{T}\int^{T\epsilon}_0\int^{T}_te^{-(s-t)M}dsdt\\
&=L(\frac{e^{-MT}-e^{-T(1-\epsilon)M}}{M^2T}+\frac{\epsilon}{M})\label{firstepsilson}
\end{align}
which goes to zero as $T\rightarrow \infty$ and then $\epsilon \rightarrow 0$.  

Write  $I_1^\epsilon$ as follows using  \eqref{thm1i} from Theorem \ref{Thm1}
\begin{equation}\label{I1Partition}
I_1^\epsilon =\frac{1}{T}\int^T_{T\epsilon}\frac{q(X_{\xi_t})}{K(X_{\xi_t})}dt-\frac{1}{T}\int^T_{T\epsilon}\frac{q(X_{\xi_t})}{K(X_{\xi_t})}e^{-\int^T_tK(X_\tau) d\tau}dt=I^\epsilon_{11}(T)-I^\epsilon_{12}(T)
\end{equation}
Using assumptions   
\begin{align}
|I^\epsilon_{12}(T)| &=|\frac{1}{T}\int^T_{T\epsilon}\frac{q(X_{\xi_t})}{K(X_{\xi_t})}e^{-\int^T_tK(X_\tau) d\tau}dt |\leq \frac{1}{T} \frac{L}{M}\int^T_{T\epsilon}e^{-(T-t)M}dt\\
&=(\frac{L}{T})\frac{-1+e^{-T(1-\epsilon)M}}{M^2}                   \rightarrow 0
\end{align}
as $T\rightarrow \infty$ for all $0<\epsilon <1$.

Since $X_t$ has invariant measure $m(x)$
\begin{equation}\label{lastIntegral}
\frac{1}{T}\int^T_{T\epsilon}\frac{q(X_t)}{K(X_t)}dt\rightarrow (1-\epsilon)\int_I\frac{q(x)}{K(x)}m(x)dx
\end{equation}
as $T\rightarrow \infty$.  For the time changed process
\begin{equation}
I^\epsilon_{11}(T)=\frac{1}{T}\int^T_{T\epsilon}\frac{q(X_{\xi_{tT}})   }{K(X_{\xi_{tT}})}dt = \frac{1}{T}\int^T_{T\epsilon}\frac{q(X_t)}{K(X_t)}dt  + \frac{1}{T}\int^T_{T\epsilon}\Delta_{tT}dt
\end{equation}
where
\begin{equation}
\Delta_{tT}=\frac{q(X_{\xi_{tT}})   }{K(X_{\xi_{tT}})}-\frac{q(X_t)}{K(X_t)}\quad T\epsilon \leq t \leq  T.
\end{equation}
By a   first order Taylor series expansion
\begin{equation}
\Delta_{tT}=F(\tilde{X}_t)(X_{\xi_{tT}}-X_t)
\end{equation}
where $F(x)$ is the bounded  derivative of $q(x)/K(x)$ and $X_{\xi_{tT}}<\tilde{X}_t<X_t$. 
Below it is shown $\xi_{tT}\rightarrow   T$ as $t\rightarrow T$. Recall
\begin{equation}
\xi_{tT}=\inf\{s:t<s<T, \frac{K(X(s))}{q(X_s)}=r_T(t)\}
\end{equation}
where 
\begin{equation}
r_T(t)=\frac{e^{-\int_0^tK(X_\tau)d\tau}-e^{-\int_0^TK(X_\tau)}d\tau}{\int^T_tq(X_s)e^{-\int_0^sK(X_\tau)d\tau}ds}
\end{equation}
By L'Hospital's Rule, $\lim_{t\rightarrow T}r_T(t)=K(X_T)/q(X_T)$, implying $\xi_{tT}\rightarrow   T$ as $t\rightarrow T$. Therefore letting $t\rightarrow T$ and then $T\rightarrow \infty$ one gets using continuity of sample paths
\begin{equation}
X_{\xi_{tT}}-X_t=X_{\xi_{tT}}-X_T-(X_t-X_T) \rightarrow 0.
\end{equation}
Thus, using the boundedness of $F$ and $T\epsilon\leq t \leq T,$ then as $T\rightarrow \infty$,  $t\rightarrow \infty$ and it follows
\begin{equation}
\Delta_{tT}=F(\tilde{X}_t)(X_{\xi_{tT}}-X_t)\rightarrow 0
\end{equation}
which implies
\begin{equation}
 \frac{1}{T}\int^T_{T\epsilon}\Delta_{tT}dt \rightarrow 0.
\end{equation}
 Finally let $\epsilon \rightarrow 0$ in \eqref{firstepsilson} and \eqref{lastIntegral} to get the desired result in \eqref{limitThm}.
\end{proof}

\bibliographystyle{unsrt}

\end{document}


\begin{thebibliography}{1}

\bibitem{millikan1910isolation} 
  Millikan, R.
The isolation of an ion, a precision measurement of its charge, and the correction of Stokes's law.
 {\em Science}, {\bf 32}, 436--448,1910.

\bibitem{bartle1964elements}
Bartle, R.~G..
 {\em The elements of real analysis, volume~2.}
Wiley New York, 1964.

\bibitem{bhattacharya2009stochastic}
Rabi~N Bhattacharya, R.~N.  and Waymire, E.~C.
{\em Stochastic processes with applications}, {\bf 61},
Siam.

\bibitem{borkar2005controlled}
 Borkar, V.~S.
 Controlled diffusion processes.
{\em Probability Surveys} {\bf 2}, 213-244, 2005.

\bibitem{cameron1945evaluation}
Cameron, R.~H. and Martin, W.~T..
 Evaluation of various wiener integrals by use of certain
  sturm-liouville differential equations. {\em community.ams.org}, 1945.

\bibitem{chung1980stopped}
 Chung, K.~L.
\newblock On stopped feynman-kac functionals.
{\em S{\'e}minaire de probabilit{\'e}s de Strasbourg}, {\bf 14},347--356, 1980.

\bibitem{darling1957occupation}
Darling, D.~A. and Kac, M.
On occupation times for markoff processes.
 {\em Transactions of the American Mathematical Society},
  {\bf 84(2)},444--458, 1957.

\bibitem{feynman1948space}
 Feynman, R.~P.(1948).
 Space-time approach to non-relativistic quantum mechanics.
 {\em Reviews of modern physics},{\bf  20(2)},36, 1948.

\bibitem{fitzsimmons1999kac}
 Fitzsimmons, P.~J. and Pitman, J.
Kac’s moment formula and the feynman--kac formula for additive
  functionals of a markov process.
{\em Stochastic processes and their applications}, {\bf 79(1)},117--134, 1999.


\bibitem{freidlin1985functional}
 Freidlin., M.~I..
 {\em Functional integration and partial differential equations,
Number 109.}, Princeton university press, 1985.

\bibitem{hagwood2019limiting}
 Hagwood, C. and Mulholland, G..
Limiting dynamics for a rotating system.
 {\em IMA Journal of Applied Mathematics}, {\bf 84(2)},444--453, 2019.


\bibitem{ioannis1980stochastic}
 Karatzas, I.
On a stochastic representation for the principal eigenvalue of a
  second-order differential equation.
{\em Stochastics}, {\bf 3},305--321, 1980.


\bibitem{kac1949distributions}
 Kac, M.
 On distributions of certain wiener functionals.
 {\em Transactions of the American Mathematical Society}, {\bf 65(1)},1--13, 1949.

\bibitem{kac1951some}
 Kac, M.
On some connections between probability theory and differential and
  integral equations.
{\em Proceedings of the second Berkeley symposium on mathematical
  statistics and probability},{\bf 2},  189--216. University of
  California Press, 1951.

\bibitem{karatzas1989optimization}
Karatzas, I.
 Optimization problems in the theory of continuous trading.
 {\em SIAM Journal on Control and Optimization}, {\bf 27(6)},1221--1259, 1989.


\bibitem{karatzas2012brownian}
  Karatzas, I and Shreve, S.
 {\em Brownian motion and stochastic calculus}, {\bf 113.}
 Springer Science \& Business Media, 2012.

\bibitem{kesten1986influence}
Kesten, K..
The influence of mark kac on probability theory.
{\em The Annals of Probability}, {\bf 14(4)},1103--1128, 1986.

\bibitem{larriba2021size}
  Larriba, A.~, C. and Carbone, F.
The size-mobility relationship of ions, aerosols, and other charged
  particle matter.
 {\em Journal of Aerosol Science}, {\bf 151}, 2021.

\bibitem{li2012effect}
{\sc Li, M, Mulholland, G.~W. and Zachariah, M.~R.}(2012).
 The effect of orientation on the mobility and dynamic shape factor of
  charged axially symmetric particles in an electric field.
 {\em Aerosol Science and Technology}, {\bf 46(9)}, 1035--1044.

\bibitem{merton1971optimization}
Merton, R..
Optimum consumption and portfolio rules in a continuous-time model. {\em Journal of Economic Theory},{\bf  3},373--413, 1971.

\bibitem{mulholland2021effect}
Mulholland, G.~W. and Hagwood, C. Effect of brownian rotation on the drift velocity of a nanorod.
{\em Journal of Aerosol Science},{\bf  156}, 2021.

\bibitem{musiela1986kac}
 Musiela, M..
 On kac functionals of one-dimensional diffusions.
 {\em Stochastic processes and their applications}, {\bf 22(1)},79--88, 1986.


\bibitem{oksendal2013stochastic}
 Oksendal, B.
{\em Stochastic differential equations: an introduction with
  applications}.
 Springer Science \& Business Media, 2013.

\bibitem{rogers2000diffusions}
  Rogers, L.~C.~G. and Williams, D.
 {\em Diffusions, markov processes, and martingales: Volume 1,
  foundations},
Cambridge university press, 2000.

\bibitem{stoikov2005dynamic}
  Stoikov, S.~F. and Zariphopoulou, T..
 Dynamic asset allocation and consumption choice in incomplete
  markets.
{\em Australian Economic Papers}, {\bf 44(4)},414--454, 2005.

\bibitem{takeda2008large}
 Takeda, M..
Large deviations for additive functionals of symmetric stable
  processes.
 {\em Journal of Theoretical Probability}, {\bf 21(2)},336--355, 2008.

\bibitem{takeda2011large}
 Takeda, M.
 A large deviation principle for symmetric markov processes with
  feynman--kac functional.
 {\em Journal of Theoretical Probability}, {\bf 24},1097--1129, 2011.

\end{thebibliography}





\end{document}